\newtheorem{theorem}{Theorem}
\newtheorem{lemma}[theorem]{Lemma}
\begin{document}

\begin{frontmatter}

\title{On the chromatic number of\\ generalized Kneser hypergraphs}

\author[label1, label2]{Hamid Reza Daneshpajouh}
\address[label1]{School of Mathematics, Institute for Research in Fundamental Sciences (IPM),
Tehran, Iran, P.O. Box 19395-5746}
\address[label2]{Moscow Institute of Physics and Technology, Institutsky lane 9, Dolgoprudny, Moscow region, 141700}


\ead{hr.daneshpajouh@ipm.ir, hr.daneshpajouh@phystech.edu}


\begin{abstract}
The generalized Kneser hypergraph $KG^{r}(n,k,s)$ is the hypergraph whose vertices are all the $k$-subsets of $\{1,\ldots ,n\}$, and edges are $r$-tuples of distinct vertices such that any pair of them has at most $s$ elements in their intersection. In this note, we show that for each non-negative integers $k, n, r, s$ satisfying $n \geq r(k-1)+1$, $k > s\geq 0$, and $r\geq 2$, we have
$$\chi ({KG}^{r}(n,k,s))\geq\left\lceil\frac{n-r(k-s-1)}{r-1}\right\rceil,$$
which extends the previously known result by Alon--Frankl--Lov\'{a}sz.
\end{abstract}
\begin{keyword}
Chromatic number\sep Generalized Kneser hypergraph\sep $\mathbb{Z}_p$-Tucker's lemma  
\end{keyword}

\end{frontmatter}
\section{Introduction}
Throughout this note, the set of $\{1,\ldots, n\}$ will be denoted by $[n]$. Recall, a hypergraph $\mathcal{H}$ is a pair $\mathcal{H} = (V, E)$ where $V$ is a finite set of elements called vertices, and $E$ is a set of non-empty subsets of $V$ called edges. The vertex set and the edge set of a hypergraph $\mathcal{H}$ are often denoted by $V(\mathcal{H})$ and $E(\mathcal{H})$, respectively. An $m$-coloring of a hypergraph $\mathcal{H}$ is a map $c: V(\mathcal{H})\to\{1,\ldots, m\}$. Moreover, $c$ is called a proper $m$-coloring if it creates no monochromatic edge, i.e., $|c(e)|\geq 2$ for all $e\in\mathcal{H}$. We say a hypergraph is $m$-colorable if it admits a proper $m$-coloring. The chromatic number of a hypergraph $\mathcal{H}$, denoted by $\chi (\mathcal{H})$, is the minimum $m$ such that $\mathcal{H}$ is $m$-colorable. Finally, the generalized Kneser hypergraph ${KG}^{r}(n,k,s)$ is a hypergraph whose vertex set and edge set are as follows
\begin{align*}
& V(KG^{r}(n,k,s))=\{X\subseteq [n]: |X|=k\}\\
& E(KG^{r}(n,k,s))=\{\{X_1, \ldots, X_r\}: X_i\in V(KG^{r}(n,k,s))\quad \&\quad |X_i\cap X_j|\leq s\,\,\text{for all}\, i\neq j\}.
\end{align*}
In $1955$, Martin Kneser conjectured that it is impossible to partition all $k$-subsets of $[n]$ into less than $n-2(k-1)$ classes such that any two distinct $k$-sets inside one class have non-empty intersection, where $n\geq 2k$. Or equivalently, in the language of graph coloring  
$$\chi ({KG}^2(n, k, 0))\geq n-2(k-1)\quad\text{for all}\,\, n\geq 2k.$$ 
After about two decades, this conjecture was confirmed by Lov\'{a}sz~\cite{Lovasz} via a tool from algebraic topology! Since then many other proofs have been found, see for instance~\cite{Daneshpajouh1, Greene, Karasev, Matousek}. 
Later, a generalization of Kneser's conjecture was raised by Erd\H{o}s~\cite{Erdos}. Erd\H{o}s conjectured that this is also impossible to partition all $k$-subsets of $[n]$ into less than $\left\lceil\frac{n-r(k-1)}{r-1}\right\rceil$ classes such that among any $r$ distinct $k$-stets inside one class at least two of them has non-empty intersection, where $n\geq rk$. Similarly, this is equivalent to saying that:
\begin{equation}
\chi ({KG}^{r}(n,k,0))\geq\left\lceil\frac{n-r(k-1)}{r-1}\right\rceil\quad\text{for all}\,\, n\geq rk.
\end{equation}

This conjecture originally confirmed by Alon--Frankl--Lov\'{a}sz~\cite{Alon} using some topological tools. We refer the reader to~\cite{Alishahi, Daneshpajouh2, Florian, Meunier1} for more recent proofs. 

In this note, we show that the following generalization of the Erd\H{o}s conjecture is also true.  
\begin{theorem}
Let $k, n, r, s$ be non-negative integers where $n \geq r(k-1)+1$, $k > s\geq 0$ and $r\geq 2$. We have
$$\chi ({KG}^{r}(n,k,s))\geq\left\lceil\frac{n-r(k-s-1)}{r-1}\right\rceil.$$
\end{theorem}
The organization of the paper is as follows. Section $2$ introduces our main tool, $\mathbb{Z}_p$-Tucker's lemma. And, Section 3 is devoted to the proof of Theorem $1$.
\section*{Acknowledgements}
I would like to thank Professors Meysam Alishahi and Roman Karasev for many fruitful discussions and helpful suggestions. I also wish to express my sincere gratitude to Professor Andrey Raigorodskii for drawing my attention to such kind of problem, and all his support.

\section{$\mathbb{Z}_p$-Tucker lemma}
The proof of our theorem is based on a combinatorial tool that is originally introduced by Ziegler~\cite{Ziegler}, called $\mathbb{Z}_p$-Tucker's lemma. There is also another version of this lemma~\cite{Meunier1}, with the same name. We need the latter version. To state the lemma, we need some notation.
Let $\mathbb{Z}_p=\{\omega, \omega^2, \ldots, \omega^p\}$ be the cyclic group of order $p$. Furthermore, assume that $0\notin\mathbb{Z}_p$.
For every $x=(x_1,\ldots ,x_n)\in {\left(\mathbb{Z}_p\cup\{0\}\right)}^{n}$, and $0\leq i\leq p$, set:
\[   
X_{i} = 
     \begin{cases}
       \{j : x_j= 0\} &\quad\text{if}\quad i=0 \\
       \{j : x_j= \omega^i\} &\quad\text{if}\quad i\geq 1.
     \end{cases}
\]
Also, for each $x=(x_1,\ldots ,x_n)\in {\left(\mathbb{Z}_p\cup\{0\}\right)}^{n}$, and $\omega^i\in\mathbb{Z}_p$, define:
$$\omega^i\cdot (x_1, \ldots, x_n) = (\omega^i\cdot x_1, \ldots, \omega^i\cdot x_n).$$
Note that, in above $\omega^i\cdot 0$ is defined as $0$. Finally, for $x, y\in {\left(\mathbb{Z}_p\cup\{0\}\right)}^{n}$ we write $x\preceq y$ if  $X_{i}\subseteq Y_{i}$ for all $1\leq i\leq p$. Now, we are in a position to state our main tool.

\begin{lemma}[$\mathbb{Z}_p$-Tucker lemma~\cite{Meunier1}]
Let $p$ be a prime number, and $n, m, \alpha$ be positive integers where $m\geq\alpha$. Moreover, assume that
\begin{align*}
  \lambda: & {\left(\mathbb{Z}_{p}\cup\{0\}\right)}^{n}\setminus\{(0,\ldots , 0)\}\longrightarrow\mathbb{Z}_p\times[m]\\
  & x\longmapsto (\lambda_{1}(x), \lambda_{2}(x)).
\end{align*}
is a map satisfying the following conditions:
\begin{itemize}
    \item $\lambda$ is a $\mathbb{Z}_p$-map, i.e., $\lambda(\omega^i\cdot x)= (\omega^i\cdot\lambda_{1}(x), \lambda_{2}(x))$ for all $x\in {\left(\mathbb{Z}_{p}\cup\{0\}\right)}^{n}\setminus\{(0,\ldots , 0)\}$ and all $\omega^{i}\in\mathbb{Z}_p$. 
    \item for all $x^{(1)}\preceq x^{(2)}$, if $\lambda_{2}(x^{(1)})= \lambda_{2}(x^{(2)})\leq\alpha$, then $\lambda_{1}(x^{(1)})= \lambda_{1}(x^{(2)})$. 
    \item for all $x^{(1)}\preceq\cdots\preceq x^{(p)}$ if $\lambda_{2}(x^{(1)})= \cdots =\lambda_{2}(x^{(p)})\geq\alpha+1$, then $\lambda_1(x^{(1)}), \ldots, \lambda_p(x^{(p)})$ are not pairwise disjoint.
\end{itemize}
Then, $\alpha + (m-\alpha)(p-1)\geq n$.
\end{lemma}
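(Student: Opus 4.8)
The plan is to prove the inequality topologically, by exhibiting a $\mathbb{Z}_p$-equivariant map out of a highly connected free $\mathbb{Z}_p$-space and then invoking Dold's theorem. First I would identify the domain: viewing $\left(\mathbb{Z}_p\cup\{0\}\right)^n\setminus\{(0,\ldots,0)\}$ as a poset under $\preceq$, its elements are exactly the faces of the $n$-fold join $E:=(\mathbb{Z}_p)^{*n}$ (each $x$ records a support together with a choice of one of the $p$ points of $\mathbb{Z}_p$ in every coordinate of that support), and the order complex of this poset is the barycentric subdivision $\mathrm{sd}(E)$. The realization $\|E\|$ is $(n-2)$-connected, being the $n$-fold join of nonempty discrete sets, and the coordinatewise multiplication $x\mapsto\omega^i\cdot x$ is a \emph{free} $\mathbb{Z}_p$-action on it: for $p$ prime no nonempty face is invariant, since multiplication by $\omega$ moves every chosen point.

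Next I would build the target. Let $T$ be the simplicial complex on vertex set $\mathbb{Z}_p\times[m]$ obtained as the join, over $\ell\in[m]$, of factors $T_\ell$, where $T_\ell$ is the discrete set $\{(\omega^i,\ell):1\le i\le p\}$ (a copy of $\mathbb{Z}_p$, dimension $0$) when $\ell\le\alpha$, and $T_\ell=\partial\Delta^{p-1}\simeq S^{p-2}$ on those same $p$ vertices when $\ell\ge\alpha+1$. Concretely, a set $F\subseteq\mathbb{Z}_p\times[m]$ spans a simplex of $T$ iff it contains at most one vertex with each label $\ell\le\alpha$ and does not contain all $p$ vertices of any label $\ell\ge\alpha+1$. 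A direct join computation gives $\dim T=\alpha+(m-\alpha)(p-1)-1$, and the diagonal action $\omega^i\cdot(\omega^j,\ell)=(\omega^{i+j},\ell)$ is free: the only subsets of a single $\mathbb{Z}_p$-factor invariant under the transitive action are $\emptyset$ and the full set, and the full set is excluded in every factor (at most one vertex in the type-$\mathrm{I}$ factors, a proper subset in the type-$\mathrm{II}$ factors).

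The heart of the argument is then to promote $\lambda$ to a $\mathbb{Z}_p$-simplicial map $f:\mathrm{sd}(E)\to T$ by $x\mapsto\lambda(x)=(\lambda_1(x),\lambda_2(x))$. Equivariance is immediate from the first hypothesis. Simpliciality is where the remaining two hypotheses enter: along a chain $x^{(1)}\prec\cdots\prec x^{(t)}$ (a simplex of $\mathrm{sd}(E)$), any two members carrying a common label $\ell\le\alpha$ are comparable, so by the second hypothesis they share the same first coordinate and hence map to a single vertex $(\,\cdot\,,\ell)$ --- the image meets each type-$\mathrm{I}$ factor in at most one vertex; and if all $p$ vertices of some label $\ell\ge\alpha+1$ appeared in the image, one could extract $p$ comparable chain members with common second coordinate $\ell\ge\alpha+1$ and pairwise distinct first coordinates, which the third hypothesis forbids. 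Thus $f(\{x^{(i)}\})$ is always a face of $T$. Since $\|\mathrm{sd}(E)\|=\|E\|$, I would finally apply Dold's theorem to the resulting $\mathbb{Z}_p$-map: a $\mathbb{Z}_p$-map from the $(n-2)$-connected free $\mathbb{Z}_p$-space $\|E\|$ into the free $\mathbb{Z}_p$-complex $\|T\|$ forces $\dim T\ge n-1$, i.e. $\alpha+(m-\alpha)(p-1)-1\ge n-1$, which is the claim.

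The main obstacle I anticipate is the bookkeeping in the simpliciality step --- in particular phrasing the third hypothesis so that ``all $p$ group elements occur along a chain'' is exactly what it rules out (matching the $\partial\Delta^{p-1}$ factors) --- together with the verification of freeness of the target action, which is precisely where primality of $p$ is used. If one prefers to avoid quoting Dold's theorem, the same equivariant map can be fed into a combinatorial degree/counting argument on the pseudomanifold $E$, but the equivariant-topology route sketched above is the most transparent.
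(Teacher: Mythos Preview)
The paper does not give its own proof of this lemma: it is quoted verbatim from Meunier~\cite{Meunier1} and used as a black box. So there is no ``paper's proof'' to compare against; the only question is whether your argument is sound.

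Your sketch is essentially the standard proof (as in Ziegler~\cite{Ziegler} and Meunier~\cite{Meunier1}): identify $(\mathbb{Z}_p\cup\{0\})^n\setminus\{0\}$ with the face poset of the $(n-2)$-connected free $\mathbb{Z}_p$-complex $(\mathbb{Z}_p)^{*n}$, build the target as a join of $\alpha$ discrete copies of $\mathbb{Z}_p$ with $m-\alpha$ copies of $\partial\Delta^{p-1}$, check that the three hypotheses on $\lambda$ are exactly what makes the induced vertex map simplicial and equivariant, and conclude via Dold's theorem. The dimension count $\dim T=\alpha+(m-\alpha)(p-1)-1$ and the freeness of both actions (using primality of $p$) are correct as you describe them. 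One small wording point: in the third bullet of the lemma ``not pairwise disjoint'' should be read as ``not pairwise distinct'' (the $\lambda_1$-values are group elements, not sets); you interpret it correctly, and this is precisely the condition ruling out a full $\partial\Delta^{p-1}$ factor in the image of a chain.
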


\section{Proof of Theorem 1}
The outline of the proof is as follows. The proof is divided into two parts. The result is first proved for the case that $r$ is a prime number, with the aid of $\mathbb{Z}_p$-Tucker's lemma. Then, we extend it to non-prime cases by a simple lemma which reduces a non-prime case to a prime case. For simplicity, let us define a notation.
For a finite set $X\subseteq\mathbb{Z}$ and any integer $0\leq l\leq |X|$, set
\[   
f_{l}(X) = 
     \begin{cases}
       \emptyset &\quad\text{if}\quad l=0 \\
       \text{the set of the first}\, l\,\text{elements of}\, X  &\quad\text{if}\quad l\geq 1.
     \end{cases}
\]
\textbf{Proof for the case that $r=p$ is a prime number:} 
\\
Let $c : V\left({KG}^{p}(n, k, s)\right)\to\{1, \ldots , C\}$ be a proper coloring of $KG^{p}(n, k, s)$ with $C$ colors. Put $\alpha= p(k-s-1)$, and $m= p(k-s-1)+ C$. In order to prove the theorem we will define a map
\begin{align*}
  \lambda:  & {\left(\mathbb{Z}_{p}\cup\{0\}\right)}^{n}\setminus\{(0,\ldots , 0)\}\longrightarrow\mathbb{Z}_p\times[m]\\
  & x\longmapsto (\lambda_{1}(x), \lambda_{2}(x))
\end{align*}
satisfying the required properties in the $\mathbb{Z}_p$-Tucker lemma. Let $x=(x_1,\ldots ,x_n)\in {\left(\mathbb{Z}_p\cup\{0\}\right)}^{n}\setminus\{(0,\ldots , 0)\}$.  Now, we consider two cases.
\begin{enumerate}[(i)]
     \item
If $|X_i|\leq k-s-1$ for all $i\in [p]$, then set
$$\lambda (x)= \left(\omega^{j}, \sum_{i=1}^{p}|X_i|\right),$$
where $\omega^{j}$ is the first nonzero element of $x$. 
\item
Consider the case that at least one of the $|X_1|, \ldots, |X_p|$ is at least $k-s$. First, note that, it is impossible that all members of $\{\left(|X_{i}|+|X_{0}|\right) : 1\leq i\leq p\}$ are simultaneously less than $k$. Since otherwise,
$$p(k-1)\geq\sum_{i=1}^{p}\left(|X_i|+|X_0|\right)= \underbrace{\sum_{i=0}^{p} |X_i|}_{n}+(p-1)|X_{0}|\geq n ,$$ which contradicts our assumption that $n\geq p(k-1)+1$. So, in this case, there is an $i\in [p]$ such that $|X_i|\geq k-s$ and $|X_i|+|X_0|\geq k$. To see this, it is enough to take $X_i$'s with the maximum size. Now, suppose $\omega^j$ be the first non-zero element of $x$ such that $|X_j|+|X_0|\geq k$ and $|X_j|\geq k-s$. Then, consider the following $k$-subset of $X_{j}\cup X_{0}$ 
$$F_{x}^{j}= f_t(X_j)\cup f_{(k-t)}(X_0),$$
where $t= \min\{k, |X_j|\}$.
In other words, $F_{x}^{j}$ is the set of the first $k$ elements of $X_{j}$, if $|X_{j}|\geq k$. Otherwise, $F_{x}^{j}$ is the union of $X_{j}$ and the first $k-|X_j|$ elements of $X_{0}$. Now, set
$$\lambda (x) = \left(\omega^j, p(k-s-1)+ c(F_{x}^j)\right).$$

It is easy to check that $\lambda$ is a $\mathbb{Z}_p$-map. So, to use the $\mathbb{Z}_p$-Tucker lemma, it is enough to show that $\lambda$ has the two other properties mentioned in the lemma.

\end{enumerate}

\begin{itemize}
    \item 
    Let $x\preceq y\in {(\mathbb{Z}_P\cup\{0\})}^n\setminus\{(0,\ldots , 0)\}$, and $\lambda_2(x)=\lambda_2(y)\leq\alpha$.
    In this case, we have $X_i\subseteq Y_i$ for all $i\in [p]$, and moreover $\sum_{i=1}^{p}|X_i|=\sum_{i=1}^{p}|Y_i|$. These imply that $x=y$. Thus, $\lambda_1(x)= \lambda_1(y)$.
    \item 
    For the second case, assume $x^{(1)}\preceq\cdots \preceq x^{(p)}\in {(\mathbb{Z}_P\cup\{0\})}^n\setminus\{(0,\ldots , 0)\}$ with $\lambda_2(x^{(1)})= \cdots= \lambda_2(x^{(p)})\geq\alpha+1$. By the definition of $\lambda$, for each $1\leq i\leq p$ there is an $1\leq l_i\leq p$, and $F_{x^{(i)}}^{l_i}\subseteq X_{l_i}^{(i)}\cup X_{0}^{(i)}$ such that
    $$\lambda (x^{(i)})= (\omega^{l_i}, p(k-s-1)+ c(F_{x^{(i)}}^{l_i})).$$
    First of all, $c(F_{x^{(1)}}^{l_1})= \cdots= c(F_{x^{(p)}}^{l_p})$ as $\lambda_2(x^{(1)})= \cdots= \lambda_2(x^{(p)})$. Note that, if for some $i < j$, we have $l_i\neq l_j$, then $X_{l_i}^{(i)}\cap X_{l_j}^{(j)}=\emptyset$ as $x^{(i)}\preceq x^{(j)}$ and the fact that $X_{l_i}^{(j)}\cap X_{l_j}^{(j)}=\emptyset$. This implies 
    $$|F_{x^{(i)}}^{l_i}\cap F_{x^{(j)}}^{l_j}|= |(F_{x^{(i)}}^{l_i}\cap X_0^{(i)})\cap (F_{x^{(j)}}^{l_j}\cap X_0^{(j)})|\leq |(F_{x^{(i)}}^{l_i}\cap X_0^{(i)})|\leq s.$$
    Therefore, all of $l_i$ cannot be pairwise distinct. Since otherwise, as we discussed above, any pair of distinct $k$-sets $F_{x^{(i)}}^{l_i}$, and $F_{x^{(j)}}^{l_j}$ has at most $s$ elements in their intersection. Thus, $\{F_{x^{(1)}}^{l_1}, \ldots, F_{x^(p)}^{l_p}\}$ is a monochromatic edge in $KG^{p}(n, k, s)$, which contradicts that $c$ is a proper coloring of $KG^{p}(n,k,s)$.    
\end{itemize}     
Now, by applying the $\mathbb{Z}_p$-Tucker lemma we have 
$$\underbrace{p(k-s-1)}_{\alpha}+ \underbrace{C}_{m-\alpha}(p-1)\geq n,$$ 
which implies that $C\geq\frac{n-p(k-s-1)}{p-1}$. So, $C\geq\left\lceil\frac{n-p(k-s-1)}{p-1}\right\rceil$ as $C$ is an integer. This is the desired conclusion.

The proof for the case that $r$ is not a prime number will be deduced from the next lemma. It is worth pointing out that the proof of the following lemma uses the same ideas as~\cite[Proposition 2.3]{Alon}.
\begin{lemma}
Let $r_1, r_2\geq 2$ be positive integers. Moreover, for $i=1, 2$, assume that 
$$\chi ({KG}^{r_i}(n,k,s))\geq\left\lceil\frac{n-r_i(k-s-1)}{r_i-1}\right\rceil,$$
for all integers $k, n, r_i, s$ satisfying $n\geq r_i(k-1)+1$, $k > s\geq 0$, and $r_i\geq 2$. Then, the following inequality
$$\chi ({KG}^{r_1r_2}(n,k,s))\geq\left\lceil\frac{n-r_1r_2(k-s-1)}{r_1r_2-1}\right\rceil,$$
is valid for all $n\geq r_1r_2(k-1)+1$, and $k> s\geq 0$.
\end{lemma}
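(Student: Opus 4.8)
The plan is to derive the case $r=r_1r_2$ from the cases $r=r_1$ and $r=r_2$ by a ``blow-up'' construction: from a hypothetical proper colouring of $KG^{r_1r_2}(n,k,s)$ one manufactures a proper colouring of a Kneser hypergraph with \emph{larger} vertices, $KG^{r_2}(n,k_2,s)$ for a suitable $k_2$, to which the $r_2$-hypothesis can be applied; the $r_1$-hypothesis enters only to make that new colouring well defined and proper. This is precisely the mechanism of~\cite[Proposition~2.3]{Alon}.

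Write $t:=k-s-1\ge 0$ and fix a proper colouring $c$ of $KG^{r_1r_2}(n,k,s)$ with $C$ colours. Since $C$ is an integer it suffices to prove $C\ge\frac{n-r_1r_2t}{r_1r_2-1}$; so assume for contradiction that $n>(r_1r_2-1)C+r_1r_2t$. Put
$$k_2:=\max\bigl\{(r_1-1)C+r_1t+1,\ r_1(k-1)+1\bigr\}.$$
The first, routine step is to check that both hypotheses apply with this $k_2$. As $k_2\ge r_1(k-1)+1\ge k>s$, the $r_1$-hypothesis applies to $KG^{r_1}(k_2,k,s)$ and, by the choice of $k_2$, gives $\chi\bigl(KG^{r_1}(k_2,k,s)\bigr)\ge\bigl\lceil\frac{k_2-r_1t}{r_1-1}\bigr\rceil\ge C+1$. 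Moreover $n\ge r_2(k_2-1)+1$: if the first term of the maximum is the larger one this comes from $n>(r_1r_2-1)C+r_1r_2t\ge r_2(r_1-1)C+r_1r_2t=r_2(k_2-1)$, and otherwise it is simply the hypothesis $n\ge r_1r_2(k-1)+1=r_2(k_2-1)+1$; with $k_2>s$ this lets us invoke the $r_2$-hypothesis on $KG^{r_2}(n,k_2,s)$.

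Next comes the construction of the colouring $c'$. Because $\chi\bigl(KG^{r_1}(k_2,k,s)\bigr)>C$, the restriction of $c$ to the $k$-subsets of any fixed $k_2$-set $B\subseteq[n]$ is not a proper colouring of the copy of $KG^{r_1}(k_2,k,s)$ on ground set $B$; hence $B$ contains $r_1$ distinct $k$-subsets, pairwise meeting in at most $s$ points, all of one colour. Let $c'(B)$ be the least such colour, giving $c'\colon V\bigl(KG^{r_2}(n,k_2,s)\bigr)\to[C]$. To see $c'$ is proper, suppose $B_1,\dots,B_{r_2}$ formed a monochromatic edge, so $|B_i\cap B_j|\le s$ for $i\ne j$ and $c'(B_i)$ equals a common value $j$ for all $i$. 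Choosing inside each $B_i$ a witnessing family $\{A_i^1,\dots,A_i^{r_1}\}$ of $c$-colour $j$ yields $r_1r_2$ distinct $k$-sets, all of $c$-colour $j$: two lying in the same $B_i$ meet in at most $s$ points by construction, and two lying in different $B_i,B_{i'}$ meet in at most $|B_i\cap B_{i'}|\le s$ points. Hence they form a monochromatic edge of $KG^{r_1r_2}(n,k,s)$, contradicting the properness of $c$.

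Finally, applying the $r_2$-hypothesis to the proper $C$-colouring $c'$ of $KG^{r_2}(n,k_2,s)$ gives $(r_2-1)C\ge n-r_2(k_2-s-1)$; substituting the value of $k_2$ and simplifying---absorbing a nonnegative remainder term, for which the hypothesis $n\ge r_1r_2(k-1)+1$ is used---yields $(r_1r_2-1)C\ge n-r_1r_2t$, which contradicts the standing assumption. I expect the only genuine difficulty here to be the bookkeeping rather than any one idea: $k_2$ must be chosen so that the parameter constraints of \emph{both} hypotheses ($\ge r_i(k-1)+1$ and $>s$) hold and so that $\chi\bigl(KG^{r_1}(k_2,k,s)\bigr)>C$, and the ``small $C$'' regime---where $(r_1-1)C+r_1t+1$ is too small and the second term of the maximum is the larger one---has to be tracked separately through the final computation, which is exactly where the hypothesis $n\ge r_1r_2(k-1)+1$ is needed.
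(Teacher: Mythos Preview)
Your proposal is correct and follows essentially the same blow-up strategy as the paper's proof (which likewise credits \cite[Proposition~2.3]{Alon}): apply the $r_1$-hypothesis inside each large set to extract a monochromatic $r_1$-tuple of $k$-sets, colour the large set by that common colour, and then invoke the $r_2$-hypothesis on the resulting colouring of large sets to produce a monochromatic $r_1r_2$-edge of the original hypergraph. The only difference is bookkeeping: the paper fixes the large-set size as $m=(r_1-1)t+r_1(k-s-1)+1$ (your first term in the $\max$, with the paper's $t$ playing the role of your $C$) and argues via a chain of inequalities that $m\ge r_1(k-1)+1$ already holds, whereas you build this lower bound into the definition of $k_2$ and handle the resulting two cases separately---a slightly more careful but otherwise equivalent organisation.
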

\begin{proof}
Clearly the claim is true for $k=1$. Indeed, ${KG}^{r}(n,1,0)$ is the complete $r$-uniform hypergraph on $n$ vertices which its chromatic number is $\left\lceil\frac{n}{r-1}\right\rceil$. So, without loss of generality assume that $k\geq 2$. Put $r=r_1r_2$, and let $t=\left\lceil\frac{n-r(k-s-1)}{r-1}\right\rceil-1$. We need to show that $\chi ({KG}^{r}(n,k,s)) > t$. 
Suppose, contrary to our claim, that $\chi ({KG}^{r}(n,k,s))\leq t$. Assume $c : V\left({KG}^{r}(n,k,s)\right)\to\{1,\ldots, t\}$ be a proper $t$-coloring of ${KG}^{r}(n,k,s)$. Put 
$$m= (r_1-1)t+r_1(k-s-1)+1 = (r_1-1)(t-1)+ r_1(k-s)$$
We define a $t$-coloring of ${KG}^{r_2}(n,m,s)$ as follows. Take an $A\in V({KG}^{r_2}(n,m,s))$. Let ${KG}^{r_1}(A,k,s)$ be the induced sub-hypergraph of ${KG}^{r_1}(n,k,s)$ on the vertex set $\{X\subseteq A: |X|=k\}$. Note that, ${KG}^{r_1}(A,k,s)$ is isomorphic with ${KG}^{r_1}(m,k,s)$. The map $c$ gives naturally a $t$-coloring of ${KG}^{r_1}(A,k,s)$, i.e., $X\in V\left({KG}^{r_1}(A,k,s)\right)\mapsto c(X)$. But this coloring is not proper, as
\begin{align*}
m = & t(r_1-1)+ r_1(k-s-1)+1 \geq \\ & \left(\frac{n-r(k-s-1)}{r-1}-1\right)(r_1-1)+ r_1(k-s-1)+1  \geq\\ & \left(\frac{n-r_1(k-s-1)}{r_1-1}-1\right)(r_1-1)+ r_1(k-s-1)+1 =\\ & n-r_1+2\geq r_1r_2(k-1)+1-r_1+2 = r_1(r_2(k-1)-1)+3\underbrace{\geq}_{\text{as}\, r_1, r_2, k\geq 2}  r_1(k-1)+1,
\end{align*}
which implies $\chi({KG}^{r_1}(A,k,s)) > t$ by the induction hypothesis. So, there are $r_1$ distinct $k$-subsets $B_A^1, \ldots, B_A^{r_1}\subseteq A$ such that $c(B_A^1)=\cdots =c(B_A^{r_1})= c_{\star}$, and $|B_A^i\cap B_A^j|\leq s$ for any $i\neq j$. Now, define the color of $A$ as their common color, i.e., $c_{\star}$. Do the same procedure for all other vertices of ${KG}^{r_2}(n,m,s)$. This coloring is not proper as well, since
\begin{align*}
  n\geq (r-1)(t-1)+r(k-s) & = (r_1r_2-r_2+r_2-1)(t-1)+ r_1r_2(k-s)\\ & =  (r_2-1)(t-1)+ r_2\underbrace{\left((r_1-1)(t-1)+ r_1(k-s)\right)}_{m},
\end{align*}
which implies $\chi({KG}^{r_2}(n,m,s)) > t$, by the induction hypothesis. Hence, there exist $A_1, \ldots , A_{r_2}\in V({KG}^{r_2}(n,m,s))$ such that $|A_i\cap A_j|\leq s$ and they have the same color. Thus,
$$E=\{B_{A_i}^{j} : 1\leq i\leq r_2, 1\leq j\leq r_1 \}$$ is a monochromatic edge in $KG^{r}(n,k,s)$. This contradicts that $c$ is a proper coloring of $KG^{r}(n,k,s)$ 
\end{proof}
\textbf{Remark:}
One can easily verify that ${KG}^{r}(n,k,0)$ is $\left\lceil\frac{n-r(k-1)}{r-1}\right\rceil$-colorable, see~\cite{Alon}. Hence, we have indeed an equality in $(1)$. So, it is of interest to know when the inequality presented in Theorem $1$ is sharp. Note that, if $k, r, s$ are fixed and $s\neq 0$, then $\chi\left({KG}^{r}(n,k,s)\right)$ can not be a linear function of $n$~\cite{Alon}. Therefore, in this case, for large enough $n$ we cannot have equality in Theorem 1. But, on the other hand, determining the chromatic number of $KG^{r}(n, k, s)$ whenever $k$ is around $\frac{n}{r}$ is also a challenging problem and studied with several authors, for this and related problems see~\cite{Balogh, Bobu, Bob}. Specially, in~\cite{Balogh} authors provided a proper coloring of $KG^{r}(n, k, s)$ with $O(s^2)$ colors where $k$ is around $\frac{n}{r}$ and $r$ is small comparing to $s$. Now, let us compare our lower bound with a lower bound that one could derive from the Alon--Frankl--Lov\'{a}sz result, for such parameters. Using theorem $1$, we get 
\begin{equation}
\chi ({KG}^{r}(rn,n,s))\geq\frac{r(s+1)}{r-1},
\end{equation}
while using the idea that there is a hypergraph homomorphism from the usual Kneser hypergraph $KG^r(rn-s,n-s, 0)$ to $KG^{r}(rn, n, s)$ and the Alon--Frankl--Lov\'{a}sz result, we could just get 
\begin{equation}
\chi ({KG}^{r}(rn,n,s))\geq\frac{r(s+1)}{r-1}-\frac{s}{r-1}.
\end{equation}
Indeed, the map which sends each $(n-s)$-subset $A\subseteq [rn-s]$ to $A\cup\{rn-s+1, \ldots, rn\}$ induces a hypergraph homomorphism from  $KG^r(rn-s,n-s, 0)$ to $KG^{r}(rn, n, s)$. 
Therefore, using the Alon--Frankl--Lov\'{a}sz result, we just get
$$\chi ({KG}^{r}(rn,n,s))\geq\chi ({KG}^{r}(rn-s,n-s,0))\geq\frac{rn-s-r(n-s-1)}{r-1}= \frac{r(s+1)}{r-1}-\frac{s}{r-1}.$$
In particular, inequality $(2)$ says $\chi\left(KG^{2}(2n, n, s)\right)\geq 2s+2$ while inequality $(3)$  just says $\chi\left(KG^{2}(2n, n, s)\right)\geq s+2$. Hence, our estimation for such a case is almost twice better! In conclusion, we can hope the presented lower bound in this paper will help in determining the chromatic number of generalized Kneser graphs for such parameters. 

\end{document}